\newcommand{\R}{\mathbb{R}}
\newcommand{\ie}{{\it i.e.}}
\newcommand{\gap}{\mathsf{gap}}
\newtheorem{lem}{Lemma}
\newtheorem{thm}{Theorem}
\newtheorem{coro}{Corollary}
\newtheorem{prop}{Proposition}
\begin{document}

\title{Short-step Methods Are Not Strongly Polynomial-Time}

\date{ }

\author{Manru Zong\thanks{The Hong Kong Polytechnic University.  E--mail: {\tt manru.zong@connect.polyu.hk}} \and Yin Tat Lee\thanks{University of Washington.  E--mail: {\tt yintat@uw.edu}} \and Man-Chung Yue\thanks{The Hong Kong Polytechnic University.  E--mail: {\tt manchung.yue@polyu.edu.hk}}}

\maketitle

\begin{abstract}
Short-step methods are an important class of algorithms for solving convex constrained optimization problems. 
In this short paper, we show that under very mild assumptions on the self-concordant barrier and the width of the $\ell_2$-neighbourhood, any short-step interior-point method is not strongly polynomial-time.
\end{abstract}

\section{Introduction}\label{sec:intro}
An algorithm for solving linear programming problems is said to be strongly polynomial-time if the required number of arithmetic operations is a polynomial in the numbers of variables and constraints, independent of the bit-length for encoding the problem instance.
A major open problem in optimization and computer science is 
whether there exists a strongly polynomial-time algorithm for solving linear programming problems.


This paper is concerned with interior-point methods~\cite{nesterov1994interior}, which are an important class of algorithms for solving convex constrained optimization problems and enjoy great success in both theory and practice~\cite{gondzio2012interior, lee2021universal, nesterov1994interior, toh1999sdpt3}.
In particular, we focus on the sub-class of path-following interior-point methods~\cite{nesterov1994interior}. These methods are built upon the concept of self-concordant barriers~\cite{nesterov1994interior}. A self-concordant barrier allows us to define a curve, called the central path, leading from a certain center of the feasible region to an optimal solution. Roughly speaking, the principle of path-following interior-point methods is to approximately trace the central path to move towards the optimal solution. The geometry of the central path therefore plays an important role in the studies of path-following interior-point methods.

By constructing a family of linear programming problems with ill-behaved central paths, it has been shown in~\cite{deza2009central} that the number of iterations of any path-following interior-point method is lower bounded by $\Omega (\sqrt{m/(\log m)^3})$, where $m$ is the number of constraints.
Based on another family of linear programming problems, the recent paper~\cite{allamigeon2018log} improved the lower bound to $\Omega (2^m)$, see also~\cite{allamigeon2021tropical}. This lower bound applies to and thus denies the strong polynomiality of a large class of path-following interior-point methods, including the short-step methods associated with the logarithmic barrier~\cite{kojima1989polynomial, monteiro1989interior}, the long-step methods~\cite{kojima1989primal} and the predictor-corrector methods~\cite{mizuno1993adaptive}.
More instances of ill-behaved central paths are presented in~\cite{deza2008polytopes, gilbert2001examples}.



One way to generalize the short-step methods is to replace the logarithmic barrier by an arbitrary self-concordant barrier~\cite{nesterov1994interior}, such as the volumetric barrier~\cite{vaidya1989new}, universal barrier~\cite{lee2021universal, nesterov1994interior}, entropic barrier~\cite{bubeck2015entropic} and Lee-Sidford barrier~\cite{lee2019solving}.
Such an idea has led to improved algorithms for solving linear programming problems, see~\cite{lee2019solving, vaidya1989new} for example.

In view of the above-mentioned results, it is interesting to ask whether the short-step method associated with a general self-concordant barrier is strongly polynomial-time.  The purpose of this short paper is to settle this question negatively, see Corollary~\ref{coro:main}.
Our study is also motivated by the paper~\cite{allamigeon2020tropicalization}, which proved that for a family of linear programming problems, slightly different from the one in~\cite{allamigeon2018log}, a certain limit of the central path associated with the entropic barrier coincides with the central path associated with the logarithmic barrier. This suggests that the exponential lower bound in~\cite{allamigeon2018log} for short-step methods associated with the logarithmic barrier could possibly be generalized to short-step methods associated with a general self-concordant barrier. Our main result Corollary~\ref{coro:main} partly fills this gap.


Finally, we should also point out that some sub-classes of linear programming problems do admit strongly polynomial-time algorithms~\cite{adler1991strongly, tardos1986strongly, ye2011simplex}. 

\section{Preliminaries}
Consider the linear programming problem
\begin{equation}
\label{opt:LP_P} \tag{P}
\begin{array}{c@{\quad}l}
\mbox{minimize} & c^\top x \\
\noalign{\smallskip}
\mbox{subject to} & Ax \le b, 
\end{array}
\end{equation}
where $c\in\R^n$, $b\in \R^m$ and $A\in \R^{m\times n}$.
Denote the feasible region of problem~\eqref{opt:LP_P} and its interior by $\mathcal{F}$ and $\mathcal{F}^\circ$, respectively.
We assume that $\mathcal{F}$ is bounded and that $\mathcal{F}^\circ$ is non-empty. 
For simplicity, we denote the optimality gap of a feasible point $x\in\mathcal{F}$ by
\[ \mathsf{gap}(x) = c^\top x - \min_{x'\in\mathcal{F}} c^\top x' .\]


The notion of self-concordant barriers plays vital role in the study of interior-point methods.
Let $\mathcal{K}\subseteq\R^n$ be a proper convex domain, \ie, a convex set with non-empty interior and containing no 1-dimensional affine subspace. A function $\phi: \mathrm{int}(\mathcal{K})\to \R$ is said to be a barrier on $\mathcal{K}$ if 
$\phi(x) \to +\infty \quad \text{as}\quad x\to \partial \mathcal{K}$.
A three times continuously differentiable convex function $\phi$ is said to be self-concordant on $\mathcal{K}$ if for any $x\in \mathrm{int}(\mathcal{K})$ and $h\in \R^n$, 
\begin{equation*}\label{eq:SC}
\left| \mathrm{D}^3\phi(x) [h,h,h] \right| \le 2 \left( \mathrm{D}^2 \phi (x) [h,h] \right)^{\frac{3}{2}}.
\end{equation*}
If $\phi$ additionally satisfies that for any $x\in \mathrm{int}(\mathcal{K})$ and $h\in \R^n$, 
\begin{equation*}\label{eq:nu_SC}
\left| \mathrm{D} \phi(x) [h] \right| \le \left( \nu\, \mathrm{D}^2\phi(x) [h,h] \right)^{\frac{1}{2}}, 
\end{equation*}
then $\phi$ is said to be $\nu$-self-concordant on $\mathcal{K}$. This paper focuses on the proper convex domain $\mathcal{K} = \mathcal{F}$, which is a polytope defined by the linear inequalities $Ax \le b$. A standard self-concordant barrier on $\mathcal{F}$ is the logarithmic barrier
\begin{equation*}
\phi_{\ln} (x) = -\sum_{i=1}^m \ln (b - Ax)_i.
\end{equation*}

Given a self-concordant barrier $\phi$ on $\mathcal{F}$, we consider the problem 
\begin{equation}
\label{opt:center} \tag{P$_\mu$}
\begin{array}{c@{\quad}l}
\mbox{minimize} & c^\top x + \mu\, \phi(x) \\
\noalign{\smallskip}
\mbox{subject to} & Ax < b.
\end{array}
\end{equation}
From \cite[Section 5.3.4]{nesterov2018lectures}, for any $\mu >0$, there exists a unique minimizer $x^\phi (\mu)$ to problem~\eqref{opt:center}. We call $x^\phi (\mu)$ the $\mu$-analytic center of problem~\eqref{opt:LP_P} associated with the self-concordant barrier $\phi$. The central path of problem~\eqref{opt:LP_P} associated with $\phi$ is then defined as (the image of) the curve $\mu \mapsto  x^\phi(\mu)$ for $\mu>0$. By \cite[Theorem 5.3.10]{nesterov2018lectures}, $x^\phi(\mu)$ converges to an optimal solution to problem~\eqref{opt:LP_P} as $\mu \to 0$.  
 

By \cite[Theorem 5.1.6]{nesterov2018lectures}, $\nabla^2 \phi(x)$ is positive definite for any $x\in \mathcal{F}^\circ$. This allows us to define a norm
\begin{equation*}
\| h \|_x = \sqrt{ h^\top\nabla^2 \phi(x)h }, \quad h\in\R^n ,
\end{equation*}
and its dual norm
\begin{equation*}
\| h \|_x^* = \sqrt{ h^\top\left(\nabla^2 \phi(x)\right)^{-1}h }, \quad h\in\R^n .
\end{equation*}
For any $\theta \in (0,1)$ and $\mu>0$, the $\ell_2$-neighbourhood of problem~\eqref{opt:LP_P} is defined as 
\[\mathcal{N}^\phi_\theta (\mu) = \left\lbrace
x\in \mathcal{F}^\circ : \left\| c + \mu \, \nabla\phi (x) \right\|_x^* \le \theta \mu 
\right\rbrace .\]
Note that $c+ \mu \nabla \phi(x)$ is the gradient of the objective function of problem~\eqref{opt:center}. The $\ell_2$-neighbourhood is important and a natural choice for the design of path-following interior-point methods since the Newton's method applied to problem~\eqref{opt:center} converges quadratically to the optimal solution $x^\phi(\mu)$~\cite[Theorem~5.2.2]{nesterov2018lectures}.
We write
\begin{equation*}
\label{eq:ell_2_N}
\mathcal{N}^\phi_\theta = \bigcup_{\mu >0} \mathcal{N}^\phi_\theta (\mu),
\end{equation*}
which is also called the $\ell_2$-neighbourhood of problem~\eqref{opt:LP_P}. By the optimality condition of problem~\eqref{opt:center}, we can see that $x^\phi (\mu) \in \mathcal{N}^\phi_\theta (\mu)$ for any $\theta \in (0,1)$ and $\mu > 0$.
%

A short-step method associated with $\phi$ is defined as an algorithm that generates a sequence of iterates $\{x^k\}_{k\ge 0}$ such that the polygonal (\ie, continuous piecewise linear) curve formed using the sequence $\{x^k\}_{k\ge 0}$ is contained in the $\ell_2$-neighbourhood $\mathcal{N}^\phi_\theta$ of problem~\eqref{opt:LP_P} for some $\theta\in (0,1)$, where $k$ is the iteration counter, or more precisely,
\begin{equation*}
\bigcup_{k\ge 0} [x^k, x^{k+1}] \subseteq \mathcal{N}^\phi_\theta .
\end{equation*}

Another popular choice of the neighbourhood for path-following algorithms for solving linear programming problems is the so-called wide neighbourhood~\cite{wright1997primal}:
\begin{align*}
\mathcal{W}_\theta (\mu) = \Big\lbrace x\in \mathcal{F}^\circ :&\, \exists\, y\in\R_+^m \text{ such that } A^\top y = -c,\ y^\top (b - A x) =m \mu , \\
&\, \text{ and }y_i\, (b - Ax)_i \ge (1-\theta) \mu,\  i = 1,\dots, m\Big\rbrace.
\end{align*}
Similarly to the $\ell_2$-neighbourhood, we write
\begin{equation*}
\mathcal{W}_\theta = \bigcup_{\mu > 0} \mathcal{W}_\theta (\mu).
\end{equation*}

\section{Main Results}
\label{sec:main}
Our non-strong polynomiality result is based on the following family of linear programs introduced in~\cite{allamigeon2018log}:
\begin{equation} \label{opt:LW} \tag*{$\normalfont{\textbf{LW}}_{r}(t)$}
\begin{array}{c@{\quad}l}
\mbox{minimize} & x_1 \\
\noalign{\smallskip}
\mbox{subject to} & x_1 \le t^2, \\
& x_2 \le t,\\
& x_{2j + 1} \le t\, x_{2j - 1}, \quad j = 1,\dots, r-1,\\
& x_{2j+1} \le t\, x_{2j}, \quad j = 1,\dots, r-1,\\
& x_{2j+2} \le t^{1 - 1/2^j} (x_{2j-1} + x_{2j}), \quad j = 1,\dots, r-1,\\
& x_{2r-1},\, x_{2r} \ge 0,
\end{array}
\end{equation}
where $t>1$ is a real number and $r\ge 1$ is an integer.
The notation \ref{opt:LW} follows from~\cite{allamigeon2018log} and signifies that the central path of this linear program is long and winding. 

To distinguish the properties of problem~\ref{opt:LW} from those of a general linear program, we introduce an extra subscript $t$ to the notations. For instances, the feasible region, the $\mu$-analytic center associated with a self-concordant barrier $\phi$ and the wide neighbourhood of problem~\ref{opt:LW} are denoted as $\mathcal{F}_t$, $ x^\phi_t (\mu) $ and $ \mathcal{W}_{\theta , t} $, respectively.

We can now present the main results of this paper, whose proofs are deferred to Section~\ref{sec:proofs}.
The main contribution of this paper is the non-strong polynomiality of short-step methods.
\begin{coro}[Non-strong Polynomiality of Short-step Methods]
\label{coro:main}
Consider \ref{opt:LW} for a sufficiently large $t>1$.
Let $\theta\in (0, \tfrac{1}{2})$, $\phi$ be a $\nu$-self-concordant barrier on $\mathcal{F}_t$ with $\nu$ independent of $t$ and $\{x^k\}_{k\ge 0}$ be a sequence of iterates generated by a short-step method associated with $\phi$. 
Suppose that 
\[ \gap(x^0) \ge 320r\nu \sqrt{t}   \quad\text{and}\quad  \gap(x^K) \le  \frac{1}{180r}  .\]
Then, $K \ge 2^{r-3}$.
\end{coro}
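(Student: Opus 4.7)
My plan is to reduce the corollary to a barrier-free statement about polygonal paths lying in the wide neighbourhood $\mathcal{W}_{\theta',t}$, for some width $\theta'\in(0,1)$ that depends only on $\theta$ and $\nu$. Once this reduction is in place, the $K\ge 2^{r-3}$ bound is supplied by the combinatorial/tropical analysis of \ref{opt:LW} from~\cite{allamigeon2018log, allamigeon2021tropical}, which is intrinsically about the wide neighbourhood and so is insensitive to the choice of barrier.

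The first and main step is to establish a containment
\[ \mathcal{N}^\phi_{\theta, t} \subseteq \mathcal{W}_{\theta',t} \]
with $\theta'=\theta'(\theta,\nu)\in(0,1)$ independent of $t$ and $r$, together with a mild rescaling $\mu\mapsto\tilde\mu$ of the barrier parameter of the same order of magnitude. For $x\in \mathcal{N}^\phi_{\theta,t}(\mu)$, the defining inequality $\|c+\mu\nabla\phi(x)\|_x^*\le\theta\mu$ is an approximate first-order optimality condition for problem~\eqref{opt:center}. Combining it with the $\nu$-self-concordance inequality and the fact that $\phi$ blows up on every facet of $\mathcal{F}_t$, I would manufacture a non-negative multiplier $y\in\R_+^m$ with $A^\top y=-c$, $y^\top(b-Ax)=m\tilde\mu$, and $y_i(b-Ax)_i\ge(1-\theta')\tilde\mu$ for each $i$---exactly the requirements placing $x$ inside $\mathcal{W}_{\theta',t}(\tilde\mu)$. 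For the logarithmic barrier the choice $y_i=\mu/(b-Ax)_i$ provides the certificate after absorbing a perturbation of norm $\theta\mu$, but for a general $\nu$-self-concordant barrier no such explicit decomposition along the rows of $A$ is available and $y$ must be manufactured indirectly from $c+\mu\nabla\phi(x)$ and the local Hessian geometry.

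With the containment in hand, the polygonal curve $\bigcup_{k\ge 0}[x^k,x^{k+1}]$ produced by any short-step method associated with $\phi$ lies in $\mathcal{W}_{\theta',t}$. I would then invoke the lower bound on the number of line segments needed to cross $\mathcal{W}_{\theta',t}$ between a point of high optimality gap and one of low optimality gap. The numerical thresholds $320r\nu\sqrt{t}$ and $1/(180r)$ in the hypothesis are sized precisely to absorb the slack introduced by the rescaling $\mu\mapsto\tilde\mu$ and by the degradation $\theta\mapsto\theta'$, and to guarantee that the iterates must traverse the $\Omega(2^r)$ combinatorially distinct pieces of the central path of \ref{opt:LW} identified in~\cite{allamigeon2018log}. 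Since each line segment of the polygonal curve meets only a bounded number of such pieces, $K\ge 2^{r-3}$ follows.

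The principal obstacle is the containment step, and within it, securing a width $\theta'$ that is independent of $t$ and $r$. The natural tool is the $\nu$-self-concordance inequality $|\mathrm{D}\phi(x)[h]|\le\sqrt{\nu\,\mathrm{D}^2\phi(x)[h,h]}$, which controls both the magnitude of $\nabla\phi(x)$ and the location of $-\nabla\phi(x)$ inside the conic hull of the active facet normals. Converting this angular control into the per-coordinate wide-neighbourhood slackness is automatic for the logarithmic barrier but requires genuine use of the general theory of self-concordant barriers here; it is precisely this uniformity of $\theta'$ in $t$ and $r$ that lets the tropical lower bound apply, at once, to every short-step method covered by the statement.
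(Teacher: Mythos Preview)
Your overall architecture is right: show the polygonal curve produced by the short-step method lies in some wide neighbourhood $\mathcal{W}_{\omega,t}$ and then invoke the Allamigeon--Benchimol--Gaubert--Joswig lower bound. But the step you yourself flag as ``the principal obstacle''---manufacturing a certificate $y\ge 0$ with $A^\top y=-c$ and per-coordinate slackness $y_i(b-Ax)_i\ge(1-\omega)\tilde\mu$---is not actually carried out in your proposal, and the route you sketch for it (extract $y$ ``indirectly from $c+\mu\nabla\phi(x)$ and the local Hessian geometry'') does not work in general. For an arbitrary $\nu$-self-concordant barrier there is no reason $-\nabla\phi(x)$ admits a nonnegative decomposition along the facet normals with the required coordinatewise balance; self-concordance controls $\nabla\phi$ only in the Hessian norm, not row by row.

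The paper sidesteps this by never attempting to decompose $\nabla\phi$. Its key device is Theorem~\ref{thm:asymp_equivalence_center} (Equivalence of Central Paths): on any constant-cost slice $\{c^\top x=\text{const}\}$, the slacks of $x^\phi(\mu)$ and of the logarithmic-barrier center $x^{\phi_{\ln}}(\eta)$ differ by a factor depending only on $\nu$ and $m$, because both Dikin ellipsoids sandwich the slice up to such a factor. One then \emph{borrows} the certificate $y_i=\eta/(b-Ax^{\phi_{\ln}}(\eta))_i$ from the logarithmic barrier's KKT conditions and checks, via this slack comparison together with Proposition~\ref{prop:distance_to_center}, that the same $y$ places every $x\in\mathcal{N}^\phi_{\theta,t}(\mu)$ in $\mathcal{W}_{\omega,t}$. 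This detour through $\phi_{\ln}$ is the missing idea. A secondary point: the resulting width $\omega$ depends on $r$ (through $C_{3r+1}$), contrary to your stated goal of $\theta'$ independent of $r$; this $r$-dependence is harmless because it is absorbed into the ``$t$ sufficiently large'' hypothesis, but your insistence on $r$-independence is an additional unjustified strengthening.
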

\noindent We should emphasize that although the self-concordance parameter $\nu$ in Corollary~\ref{coro:main} is assumed to be independent of $t$, it could possibly depend on the numbers of variables and constraints of problem~\ref{opt:LW}. This subsumes almost all known barriers. Furthermore, the constants in Corollary~\ref{coro:main} are simplified for readability. They are not optimal and can be made slightly better (see Corollary~\ref{coro:main_precise}). Finally, we note that the requirement on the initial optimality gap is only very mild. Indeed, it is customary for interior-point methods to start at the $\infty$-analytic center $x^\phi(\infty)$. Using Theorem~\ref{thm:gap}, we can check that $\gap(x^\phi(\infty)) = \Omega(\tfrac{t}{\nu})$, see also the proof of Lemma~\ref{lem:eta_mu}.

The cruxes of the proof of Corollary~\ref{coro:main} are the following theorems.
The first shows a certain equivalence among the central paths of all self-concordant barriers. Geometrically speaking, it guarantees that when restricted to a constant-cost slice, all central paths are approximately equally close to (or away from) the boundary of the feasible region.
\begin{thm}[Equivalence of Central Paths]
\label{thm:asymp_equivalence_center}
Consider the linear program~\eqref{opt:LP_P}. Let $\phi$ and $\psi$ be $\nu_\phi$- and $\nu_\psi$-self-concordant barriers on the feasible region $\mathcal{F}$.
Let $\mu, \eta > 0$  be such that $c^\top x^\phi(\mu) = c^\top x^\psi (\eta)$.  Then, for any $i=1,\dots,m$,
\begin{equation*}
(1+ \nu_\phi + 2\sqrt{\nu_\phi})^{-1} \le \frac{(b - Ax^\phi(\mu))_i}{(b - Ax^\psi(\eta))_i} \le (1+ \nu_\psi + 2\sqrt{\nu_\psi}).
\end{equation*}
\end{thm}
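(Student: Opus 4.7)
The plan is to reinterpret both $x^\phi(\mu)$ and $x^\psi(\eta)$ as analytic centers of the \emph{same} sliced polytope, and then translate the standard ``the domain is contained in a bounded-radius Dikin ellipsoid at a barrier's minimizer'' estimate into a bound on slack ratios.

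First, I would verify that $x^\phi(\mu)$ is the $\phi$-minimizer on the slice $\mathcal{S} := \{x \in \mathcal{F} : c^\top x = c^\top x^\phi(\mu)\}$. Indeed, the first-order condition $\nabla \phi(x^\phi(\mu)) = -c/\mu$ defining the $\mu$-analytic center coincides with the KKT stationarity condition for $\min_{x \in \mathcal{S}^\circ} \phi(x)$, with Lagrange multiplier $-1/\mu$ attached to the equality $c^\top x = c^\top x^\phi(\mu)$. Since $c^\top x^\phi(\mu) = c^\top x^\psi(\eta)$ by assumption, the same slice $\mathcal{S}$ also contains $x^\psi(\eta)$, which is analogously the $\psi$-minimizer on $\mathcal{S}^\circ$.

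Next, I would invoke the standard Nesterov--Nemirovski estimate: since the restriction $\phi|_\mathcal{S}$ is still a $\nu_\phi$-self-concordant barrier on $\mathcal{S}$, the whole slice lies in the Dikin ellipsoid of $\phi$-radius $\nu_\phi + 2\sqrt{\nu_\phi}$ centered at its minimizer (see \cite[Section~5.3]{nesterov2018lectures}). Applied to $x^\psi(\eta) \in \mathcal{S}$, this gives
\[
R \;:=\; \bigl\| x^\psi(\eta) - x^\phi(\mu) \bigr\|_{x^\phi(\mu)} \;\le\; \nu_\phi + 2\sqrt{\nu_\phi},
\]
where $\|h\|_x = \sqrt{h^\top \nabla^2 \phi(x)\, h}$. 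There is no ambiguity between the ambient $\phi$-Hessian and that of the restricted barrier here, since the displacement $x^\psi(\eta) - x^\phi(\mu)$ lies in the tangent space $\{v : c^\top v = 0\}$ of $\mathcal{S}$, on which the two Hessians agree.

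Finally, I would convert this distance bound into the desired slack-ratio inequality. Setting $v = (x^\phi(\mu) - x^\psi(\eta))/R$, the point $x^\phi(\mu) + v$ has unit $\phi$-norm distance from $x^\phi(\mu)$ and hence lies in $\mathcal{F}$ by the Dikin ellipsoid inclusion \cite[Theorem~5.1.6]{nesterov2018lectures}. The componentwise feasibility $(A(x^\phi(\mu) + v))_i \le b_i$ rearranges to $(A x^\phi(\mu) - A x^\psi(\eta))_i \le R \cdot (b - A x^\phi(\mu))_i$, i.e.,
\[
(b - A x^\psi(\eta))_i \;\le\; (1+R)\,(b - A x^\phi(\mu))_i \;\le\; \bigl(1 + \nu_\phi + 2\sqrt{\nu_\phi}\bigr)(b - A x^\phi(\mu))_i,
\]
which is precisely the lower bound of the theorem. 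Swapping the roles of $(\phi, \mu)$ and $(\psi, \eta)$ throughout yields the upper bound. The only subtlety I foresee is the slice-restriction step, but it is entirely standard: $\phi|_\mathcal{S}$ inherits $\nu_\phi$-self-concordance, and the tangency observation above makes the transfer of local norms between the restricted and ambient barriers transparent.
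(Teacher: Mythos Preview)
Your proof is correct and essentially the same as the paper's: both hinge on the inclusion of the constant-cost slice $\mathcal{F}\cap\mathcal{H}$ in the Dikin ellipsoid of radius $\nu_\phi+2\sqrt{\nu_\phi}$ about $x^\phi(\mu)$ (the paper cites \cite[Theorem~5.3.8]{nesterov2018lectures} directly, while you obtain it by recognizing $x^\phi(\mu)$ as the $\phi$-minimizer on the slice and invoking the analytic-center containment theorem), and then convert the resulting local-norm bound into a slack-ratio inequality via the unit Dikin inclusion $\mathcal{E}(\nabla^2\phi(x^\phi(\mu)),x^\phi(\mu))\subseteq\mathcal{F}$. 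The only cosmetic difference is that the paper routes this last conversion through Lemma~\ref{lem:ellipsoid}, whereas you pick a single feasible boundary point directly, which yields exactly the same inequality.
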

The second one bounds the optimality gap of analytic centers. 

\begin{thm}[Optimality Gap of Analytic Center]
\label{thm:gap}
Consider the linear program~\eqref{opt:LP_P}. Let $\phi$ be a $\nu$-self-concordant barrier on the feasible region $\mathcal{F}$. Then, for any $\mu > 0$,
\begin{equation*}
\min\left\lbrace \frac{\mu}{2}, \frac{ \rho \|c\|_2}{2\nu + 4\sqrt{\nu}} \right\rbrace \le \gap(x^\phi(\mu)) \le \mu \nu,
\end{equation*}
where $\rho > 0$ is the radius of the largest ball contained in $\mathcal{F}$.
\end{thm}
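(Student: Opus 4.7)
Both bounds rely on the first-order optimality condition $c + \mu \nabla \phi(x^\phi(\mu)) = 0$ of problem~\eqref{opt:center}. For the upper bound, fix any optimal $x^\star$ of~\eqref{opt:LP_P} and apply the standard $\nu$-self-concordant barrier inequality $\nabla \phi(x)^\top (y - x) \le \nu$ (valid for $x\in\mathcal{F}^\circ$, $y\in\mathcal{F}$) at $x = x^\phi(\mu)$, $y = x^\star$; combined with $c = -\mu\nabla\phi(x^\phi(\mu))$ this yields directly
\[ \gap(x^\phi(\mu)) = c^\top(x^\phi(\mu) - x^\star) = \mu\, \nabla\phi(x^\phi(\mu))^\top(x^\star - x^\phi(\mu)) \le \mu\nu. \]

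For the lower bound, my starting point is that the Dikin ellipsoid $\{y : \|y - x\|_x \le 1\}$ is contained in $\mathcal{F}$ for every $x \in \mathcal{F}^\circ$. Minimizing $c^\top y$ over this ellipsoid at $x = x^\phi(\mu)$ gives $\gap(x^\phi(\mu)) \ge \|c\|_{x^\phi(\mu)}^*$. Writing $\alpha := \|\nabla\phi(x^\phi(\mu))\|_{x^\phi(\mu)}^*$ and using $c = -\mu\nabla\phi(x^\phi(\mu))$, this becomes $\gap(x^\phi(\mu)) \ge \mu\alpha$, and I split on whether $\alpha \ge 1/2$ or $\alpha < 1/2$. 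The first case yields $\gap(x^\phi(\mu)) \ge \mu/2$ immediately. In the second case, $\alpha$ is exactly the Newton decrement of $\phi$ at $x^\phi(\mu)$, which by standard self-concordance theory places $x^\phi(\mu)$ well inside the region of quadratic convergence of Newton's method for $\phi$ toward $x_\infty := \argmin_{\mathcal{F}^\circ}\phi$. The usual Newton-step bound controls $\|x^\phi(\mu) - x_\infty\|_{x^\phi(\mu)}$ by a small explicit constant $r<1$, and the self-concordance Hessian comparison $(1-r)^2\nabla^2\phi(x^\phi(\mu)) \preceq \nabla^2\phi(x_\infty)$ then gives $\|c\|_{x^\phi(\mu)}^* \gtrsim \|c\|_{x_\infty}^*$.

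It remains to lower-bound $\|c\|_{x_\infty}^*$, for which I invoke the Nesterov--Nemirovski symmetry property $\mathcal{F} \subseteq \{y : \|y - x_\infty\|_{x_\infty} \le \nu + 2\sqrt{\nu}\}$. Applied to $y = x_0 + \rho z$ with $\|z\|_2 \le 1$ (which lies in $\mathcal{F}$ by the hypothesis that $\mathcal{F}$ contains a ball of radius $\rho$ centered at $x_0$), the triangle inequality yields $\rho\|z\|_{x_\infty} \le 2(\nu + 2\sqrt{\nu})$ for every unit $z$, whence $\lambda_{\max}(\nabla^2\phi(x_\infty)) \le 4(\nu+2\sqrt{\nu})^2/\rho^2$ and consequently $\|c\|_{x_\infty}^* \ge \|c\|_2/\sqrt{\lambda_{\max}(\nabla^2\phi(x_\infty))} \ge \rho\|c\|_2/(2\nu + 4\sqrt{\nu})$, recovering the second term of the minimum. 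The main obstacle will be the case $\alpha < 1/2$: the constants arising from the Newton-decrement bound and the Hessian comparison must be absorbed cleanly into the stated $2\nu + 4\sqrt{\nu}$ denominator, rather than merely yielding a bound of the form $\Omega(\rho\|c\|_2/(\nu + \sqrt{\nu}))$ with an additional universal prefactor.
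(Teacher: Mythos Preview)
Your overall plan matches the paper's exactly: same upper-bound argument, same Dikin-ellipsoid inequality $\gap(x^\phi(\mu)) \ge \|c\|_{x^\phi(\mu)}^* = \mu\alpha$, and the same case split at $\alpha = 1/2$. The case $\alpha \ge 1/2$ is identical.

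You have correctly identified the only real obstacle, and it is genuine: the Hessian-comparison route you propose for $\alpha<1/2$ will not reach the stated constant. With $r=\|x^\phi(\mu)-x_\infty\|_{x^\phi(\mu)}\le \alpha/(1-\alpha)$, the factor $(1-r)$ you pick up tends to $0$ as $\alpha\uparrow 1/2$, so the bound $\gap(x^\phi(\mu))\ge (1-r)\,\|c\|_{x_\infty}^*$ degenerates exactly at the threshold. Shifting the split point (say to $\alpha=1/4$) rescues a universal constant but then costs you on the $\mu$ side, so you never land on $\min\{\mu/2,\ \rho\|c\|_2/(2C_\nu)\}$ exactly.

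The paper's fix is to bypass Hessian comparison altogether and work directly with $\gap$ in the $\|\cdot\|_{x_\infty}$ geometry. From $\alpha\le 1/2$ one obtains (via Nesterov's Lemma~5.1.5 and Theorem~5.2.1) the bound $\|x^\phi(\mu)-x_\infty\|_{x_\infty}\le 1/2$, with the norm taken at $x_\infty$. Then two applications of your Dikin-ellipsoid argument, both centered at $x_\infty$, give
\[
\gap(x^\phi(\mu)) \ \ge\ \gap(x_\infty) - \tfrac12\|c\|_{x_\infty}^*,\qquad
\gap(x_\infty) \ \ge\ \|c\|_{x_\infty}^*,
\]
which combine to $\gap(x^\phi(\mu)) \ge \tfrac12\|c\|_{x_\infty}^*$ with no $\alpha$-dependent loss. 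For the final step, the paper also avoids your triangle-inequality factor of $2$: since $\mathcal{F}\subseteq \mathcal{E}(C_\nu^{-2}\nabla^2\phi(x_\infty),x_\infty)$ and $\mathcal{F}$ contains \emph{some} ball of radius $\rho$, the concentric ball $\mathcal{E}(\rho^{-2}I,x_\infty)$ is already contained in that ellipsoid (an ellipsoid containing an off-center ball of radius $\rho$ contains the centered one), whence $\|c\|_{x_\infty}^*\ge \rho\|c\|_2/C_\nu$. These two tweaks are what deliver the clean denominator $2\nu+4\sqrt{\nu}$.
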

\noindent It should be pointed out that the upper bound in Theorem~\ref{thm:gap} is not new and can be found in~\cite[Theorem~5.3.10]{nesterov2018lectures}. The novelty of the theorem lies in the lower bound.

\section{Proofs}\label{sec:proofs}
In this section, we prove the main results.
Throughout this section, given a vector $\bar{x}\in \R^n$ and a positive definite matrix $Q$, we let $\mathcal{E}(Q,\bar{x}) = \{ x\in\R^n: (x-\bar{x})^\top Q(x-\bar{x}) \le1 \}$. Also, for any $\nu > 0$, we let $C_\nu = \nu + 2\sqrt{\nu}$. 

\subsection{Proof of Theorem~\ref{thm:asymp_equivalence_center}}
We first collect a simple lemma about linear optimization over ellipsoids, whose proof uses only elementary optimality arguments and is thus omitted.
\begin{lem}\label{lem:ellipsoid}
Let $\bar{x}\in\R^n$, $a\in\R^n$ and $Q\in\R^{n\times n}$ be a positive definite matrix. Then,
\begin{equation*}
\max_{ x \in \mathcal{E}(Q, \bar{x}) } a^\top x = a^\top \bar{x} + \sqrt{a^\top Q^{-1} a}\quad\text{and}\quad \min_{x \in \mathcal{E}(Q, \bar{x})} a^\top x = a^\top \bar{x} - \sqrt{a^\top Q^{-1} a}.
\end{equation*}
\end{lem}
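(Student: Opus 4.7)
The plan is to reduce this constrained linear optimization problem to a one-line application of the Cauchy--Schwarz inequality in a suitable weighted inner product. First I would translate the variable by setting $y = x - \bar{x}$, which rewrites the maximum as $a^\top \bar{x} + \max\{a^\top y : y^\top Q y \le 1\}$. Since $a^\top \bar{x}$ is constant, it suffices to bound $a^\top y$ over the unit $Q$-ellipsoid.

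For the upper bound I would write $a^\top y = (Q^{-1/2} a)^\top (Q^{1/2} y)$ and apply Cauchy--Schwarz to obtain $a^\top y \le \sqrt{a^\top Q^{-1} a}\cdot \sqrt{y^\top Q y} \le \sqrt{a^\top Q^{-1} a}$. To confirm tightness, I would exhibit the explicit maximizer $y^\star = Q^{-1} a / \sqrt{a^\top Q^{-1} a}$, which satisfies $(y^\star)^\top Q\, y^\star = 1$ and $a^\top y^\star = \sqrt{a^\top Q^{-1} a}$; the degenerate case $a = 0$ is trivial and must be noted separately. Translating back yields the claimed formula for the maximum.

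The minimum identity then follows immediately by applying the maximum formula with $a$ replaced by $-a$. As a cross-check, one can rederive both extrema through the KKT conditions: the stationarity condition $a = 2\lambda\, Q(x - \bar{x})$, together with the active quadratic constraint at optimality, forces $\lambda = \pm \tfrac{1}{2}\sqrt{a^\top Q^{-1} a}$, and substituting recovers exactly the two values above. There is essentially no obstacle here; the argument is a standard finite-dimensional exercise, and the only mild care needed is to handle the degenerate case $a = 0$, which is immediate since both extrema then equal $a^\top \bar{x}$.
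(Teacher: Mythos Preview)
Your proof is correct and complete; the Cauchy--Schwarz argument with the explicit maximizer (and the $a=0$ caveat) is airtight, and your KKT cross-check is exactly the kind of ``elementary optimality argument'' the paper alludes to. The paper itself omits the proof entirely, so there is nothing further to compare.
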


We can now prove Theorem~\ref{thm:asymp_equivalence_center}.
\begin{proof}[Proof of Theorem~\ref{thm:asymp_equivalence_center}]
Let $\mathcal{H} = \{x\in \R^n: c^\top x = c^\top x^\phi(\mu) \} $. 
By \cite[Theorem~5.1.5]{nesterov2018lectures}, we have
\begin{equation*}\label{eq:dikin}
\mathcal{E} ( \nabla^2 \phi(x^\phi(\mu)), x^\phi(\mu))  \subseteq \mathcal{F} \quad\text{and}\quad
\mathcal{E} ( \nabla^2 \psi(x^\psi(\eta)), x^\psi(\eta))  \subseteq \mathcal{F}.
\end{equation*}
Combining~\cite[Theorem~5.3.8]{nesterov2018lectures} with these inclusions yields
\begin{align*}
\mathcal{E} ( \nabla^2 \phi(x^\phi(\mu)), x^\phi(\mu)) \cap \mathcal{H} \subseteq \mathcal{F}\cap \mathcal{H} \subseteq \mathcal{E} ( C_{\nu_\phi}^{-2} \nabla^2 \phi(x^\phi(\mu)), x^\phi(\mu)) \cap \mathcal{H} ,
\end{align*}
and 
\begin{align*}
\mathcal{E} ( \nabla^2 \psi(x^\psi(\eta)), x^\psi(\eta)) \cap \mathcal{H} \subseteq \mathcal{F}\cap \mathcal{H} \subseteq \mathcal{E} ( C_{\nu_\psi}^{-2} \nabla^2 \psi(x^\psi(\eta)), x^\psi(\eta)) \cap \mathcal{H} ,
\end{align*}
which implies, respectively,
\begin{equation*}
x^\psi(\eta) \in \mathcal{E} ( C_{\nu_\phi}^{-2} \nabla^2 \phi(x^\phi(\mu)), x^\phi(\mu)) \cap \mathcal{H} \quad\text{and}\quad x^\phi(\mu) \in \mathcal{E} ( C_{\nu_\psi}^{-2} \nabla^2 \psi(x^\psi(\eta)), x^\psi(\eta)) \cap \mathcal{H}.
\end{equation*}
Using Lemma~\ref{lem:ellipsoid}, for any $i=1,\dots,m$,
\begin{equation}\label{eq:max}
\max_{x'\in \mathcal{E}(C_{\nu_\phi}^{-2} \nabla^2 \phi (x^\phi(\mu)), x^\phi (\mu))} (b - Ax')_i = (b - Ax^\phi (\mu))_i + C_{\nu_\phi} \sqrt{a_i^\top  \left(\nabla^2 \phi(x^\phi(\mu)) \right)^{-1} a_i},
\end{equation}
and
\begin{equation}\label{eq:min}
\min_{x'\in \mathcal{E}(C_{\nu_\phi}^{-2} \nabla^2 \phi (x^\phi(\mu)), x^\phi (\mu))} (b - Ax')_i = (b - Ax^\phi (\mu))_i - C_{\nu_\phi} \sqrt{a_i^\top  \left(\nabla^2 \phi(x^\phi(\mu)) \right)^{-1} a_i} ,
\end{equation}
where $a_i^\top$ is the $i$-row of $A$.
Also, since $Ax' \le b$ for any $x' \in \mathcal{E}( \nabla^2 \phi (x^\phi(\mu)), x^\phi (\mu)) \subseteq \mathcal{F}$, Lemma~\ref{lem:ellipsoid} implies that
\begin{equation*}
0\le  \min_{x'\in \mathcal{E}( \nabla^2 \phi (x^\phi(\mu)), x^\phi (\mu))} (b - Ax')_i = (b - Ax^\phi (\mu))_i -  \sqrt{a_i^\top  \left(\nabla^2 \phi(x^\phi(\mu)) \right)^{-1} a_i} .
\end{equation*}
It follows from \eqref{eq:max}, \eqref{eq:min} and $ x^\psi(\eta) \in  \mathcal{E} ( C_{\nu_\phi}^{-2} \nabla^2 \phi(x^\phi(\mu)), x^\phi(\mu))$ that
\begin{align*}
& \left| (b - Ax^\phi(\mu))_i - (b - Ax^\psi(\eta))_i \right| 
\le  C_{\nu_\phi} \sqrt{a_i^\top \left(\nabla^2 \phi(x^\phi(\mu))\right)^{-1} a_i} \le C_{\nu_\phi} (b - Ax^\phi(\mu))_i.
\end{align*}
Using the same arguments, we also obtain
\begin{align*}
& \left| (b - Ax^\phi(\mu))_i - (b - Ax^\psi(\eta))_i \right| 
\le  C_{\nu_\psi} \sqrt{a_i^\top \left(\nabla^2 \psi(x^\psi(\eta))\right)^{-1} a_i} \le C_{\nu_\psi} (b - Ax^\psi(\eta))_i.
\end{align*}
Thus, for any $i=1,\dots,m$,
\begin{align*}
(b - Ax^\psi(\eta))_i \le (1+C_{\nu_\phi}) (b - Ax^\phi(\mu))_i  \quad\text{and}\quad (b - Ax^\phi(\mu))_i \le (1+C_{\nu_\psi}) (b - Ax^\psi(\eta))_i ,
\end{align*}
which completes the proof.
\end{proof}

\subsection{Proof of Theorem~\ref{thm:gap}}
We next prove Theorem~\ref{thm:gap}.
\begin{proof}[Proof of Theorem~\ref{thm:gap}]
The upper bound follows directly from~\cite[Theorem~5.3.10]{nesterov2018lectures}. Therefore, we prove only the lower bound. Two cases are discussed separately: 
\begin{align*}
&\, \nabla \phi(x^\phi(\mu)))^\top \left(\nabla^2 \phi(x^\phi(\mu))\right)^{-1} \nabla \phi(x^\phi(\mu))) \le \frac{1}{4} \\
\quad\text{and}\quad &\, \nabla \phi(x^\phi(\mu)))^\top \left(\nabla^2 \phi(x^\phi(\mu))\right)^{-1} \nabla \phi(x^\phi(\mu))) > \frac{1}{4}.
\end{align*}
We first consider the case of 
\[ \nabla \phi(x^\phi(\mu)))^\top \left(\nabla^2 \phi(x^\phi(\mu))\right)^{-1} \nabla \phi(x^\phi(\mu))) \le \frac{1}{4} . \]
By \cite[Lemma~5.1.5 and Theorem~5.2.1]{nesterov2018lectures}, we have
\begin{align*}
\frac{\|  x^\phi (\mu) - x^\phi (\infty) \|_{x^\phi (\infty)}^2}{1+ \frac{2}{3}\| x^\phi (\mu) - x^\phi (\infty) \|_{x^\phi (\infty)}} \le \frac{(\tfrac{1}{4})^2}{1 - \tfrac{1}{4}} = \frac{1}{12},
\end{align*}
where $x^\phi (\infty)$ is the $\infty$-analytic center, \ie, the unique optimal solution to the problem
\begin{equation*}
\begin{array}{c@{\quad}l}
\mbox{minimize} &  \phi(x) \\
\noalign{\smallskip}
\mbox{subject to} & Ax < b.
\end{array}
\end{equation*}
Solving the quadratic inequality, we get
\begin{equation}
\label{ineq:thm2_proof_1}
\| x^\phi(\mu) - x^\phi (\infty) \|_{x^\phi(\infty)} \le \frac{1}{2}.
\end{equation}
Using Lemma~\ref{lem:ellipsoid} and inequality~\eqref{ineq:thm2_proof_1}, we have that 
\begin{equation}
\label{ineq:thm2_proof_2}
\begin{split}
\gap(x^\phi(\mu)) & \ge \min_{x\in \mathcal{E}( 16 \nabla^2 \phi(x^\phi(\infty)) , x^\phi(\infty))} \gap (x) \\
& = \gap(x^\phi(\infty)) - \frac{1}{2}\sqrt{c^\top \left( \nabla^2 \phi(x^\phi(\infty)) \right)^{-1} c} .
\end{split}
\end{equation}
Next, since $\mathcal{E}( \nabla^2 \phi(x^\phi(\infty)) , x^\phi(\infty)) \subseteq \mathcal{F}$, Lemma~\ref{lem:ellipsoid} implies
\begin{equation}
\label{ineq:thm2_proof_4}
0 \le \min_{x\in \mathcal{E}( \nabla^2 \phi(x^\phi(\infty)) , x^\phi(\infty))} \gap (x) \\
 = \gap(x^\phi(\infty)) - \sqrt{c^\top \left( \nabla^2 \phi(x^\phi(\infty)) \right)^{-1} c}.
\end{equation}
Combining the inequalities~\eqref{ineq:thm2_proof_2} and~\eqref{ineq:thm2_proof_4}, we obtain
\begin{equation}
\label{ineq:thm2_proof_3}
\gap(x^\phi(\mu)) \ge \frac{1}{2}\sqrt{c^\top \left( \nabla^2 \phi(x^\phi(\infty)) \right)^{-1} c}.
\end{equation}
On the other hand, by supposition, the feasible region $\mathcal{F}$ contains a ball of radius $\rho$. By \cite[Theorem~5.3.9]{nesterov2018lectures}, it is contained in the ellipsoid $\mathcal{E}(C_\nu^{-2} \nabla^2\phi (x^\phi(\infty)), x^\phi(\infty) )$. Hence,
\[ \mathcal{E}(\rho^{-2} I, x^\phi(\infty)) \subseteq \mathcal{E}(C_\nu^{-2} \nabla^2\phi (x^\phi(\infty)), x^\phi(\infty) ) ,\]
where the left-hand side is a ball with center $x^\phi(\infty)$ of radius $\rho$.
Using Lemma~\ref{lem:ellipsoid}, we get 
\begin{align*}
& \,\gap( x^\phi(\infty)) + \rho \|c\|_2  = \max_{x \in \mathcal{E}(\rho^{-2} I, x^\phi(\infty))} \gap(x) 
\le \max_{x \in \mathcal{E}(C_\nu^{-2} \nabla^2\phi (x^\phi(\infty)), x^\phi(\infty) )} \gap(x) \\
= &\, \gap( x^\phi(\infty)) + (\nu+2\sqrt{\nu} )  \sqrt{c^\top \left( \nabla^2 \phi(x^\phi(\infty)) \right)^{-1} c},
\end{align*}
which, upon substitution into~\eqref{ineq:thm2_proof_3}, yields
\[ \gap(x^\phi(\mu)) \ge \frac{\rho \|c\|_2}{2(\nu+2\sqrt{\nu})} .\]
Then, we consider the case of 
\[ \nabla \phi(x^\phi(\mu)))^\top \nabla^2 \phi(x^\phi(\mu)) \nabla \phi(x^\phi(\mu))) > \frac{1}{4} .\]
Using the optimality condition of problem~\eqref{opt:center}, we get $c + \mu \nabla \phi(x^\phi(\mu)) = 0$.
Therefore,
\begin{equation}
\label{ineq:thm2_proof_5}
c^\top \left(\nabla^2 \phi(x^\phi(\mu))\right)^{-1} c > \frac{1}{4}\mu^2. 
\end{equation}
Since $ \mathcal{E}( \nabla^2 \phi(x^\phi(\mu)), x^\phi(\mu) ) \subseteq \mathcal{F}$, by Lemma~\ref{lem:ellipsoid}, 
\[ 0 \le \min_{x\in \mathcal{E}( \nabla^2 \phi(x^\phi(\mu)), x^\phi(\mu) ) } = \gap(x^\phi(\mu)) - \sqrt{ c^\top \left(\nabla^2 \phi(x^\phi(\mu))\right)^{-1} c }, \]
which, together with \eqref{ineq:thm2_proof_5}, yields
\[ \gap(x^\phi(\mu)) > \frac{\mu}{2}. \]
This completes the proof.
\end{proof}

\subsection{Some Extra Tools}
The following proposition compares the slack of the $\mu$-analytic center with that of any point in its $\ell_2$-neighbourhood.
\begin{prop}\label{prop:distance_to_center}
Consider the linear program~\eqref{opt:LP_P}. Let $\theta\in (0, (\sqrt{69}-3)/10)$, $\mu >0$, $\phi$ be a self-concordant function on $\mathcal{F}$ and $x\in\mathcal{N}^\phi_\theta (\mu)$.  
Then, for any $i=1,\dots,m$,
\begin{equation*}
(1-\beta_\theta) (b - A x^\phi (\mu))_i \le ( b - Ax )_i \le (1+\beta_\theta) (b - A x^\phi (\mu))_i,
\end{equation*}
where $\beta_\theta \in (0,1)$ is a constant depending only on $\theta$ defined in~\eqref{eq:beta}.
\end{prop}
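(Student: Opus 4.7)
The plan is to reduce the proposition to an application of the same Dikin-ellipsoid machinery that drove the proofs of Theorems~\ref{thm:asymp_equivalence_center} and~\ref{thm:gap}, with the extra input being a bound on $\|x - x^\phi(\mu)\|_{x^\phi(\mu)}$ extracted from the $\ell_2$-neighbourhood condition via standard Newton analysis for self-concordant functions.

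First, I would observe that the function $F_\mu(x) := c^\top x/\mu + \phi(x)$ is self-concordant, has $x^\phi(\mu)$ as its unique minimizer, and has Newton decrement
\[
\lambda_{F_\mu}(x) \;=\; \|\nabla F_\mu(x)\|_x^* \;=\; \frac{1}{\mu}\,\|c + \mu \nabla\phi(x)\|_x^* \;\le\; \theta
\]
by the defining inequality of $\mathcal{N}^\phi_\theta(\mu)$. Standard Newton analysis for self-concordant functions (e.g.\ \cite[Theorems~5.1.8, 5.2.1]{nesterov2018lectures}) then converts the bound on $\lambda_{F_\mu}(x)$ into a bound of the form
\[
\|x - x^\phi(\mu)\|_{x^\phi(\mu)} \;\le\; \beta_\theta,
\]
for an explicit function $\beta_\theta$ of $\theta$. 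The restriction $\theta\in(0,(\sqrt{69}-3)/10)$ is precisely the range in which the resulting (quadratic) inequality for $\beta_\theta$ admits a solution with $\beta_\theta<1$; the formula for $\beta_\theta$ arrives as the corresponding algebraic root, which I would record as the referenced display~\eqref{eq:beta}.

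Having placed $x$ inside the rescaled Dikin ellipsoid $\mathcal{E}(\beta_\theta^{-2}\nabla^2\phi(x^\phi(\mu)),x^\phi(\mu))$, I would then mimic the ellipsoid computation used in the proof of Theorem~\ref{thm:asymp_equivalence_center}. Applying Lemma~\ref{lem:ellipsoid} with the affine function $x'\mapsto (b-Ax')_i$ (whose linear part is $-a_i$, the $i$-th row of $A$) yields
\[
\bigl|(b-Ax)_i - (b-Ax^\phi(\mu))_i\bigr| \;\le\; \beta_\theta\,\sqrt{a_i^\top\bigl(\nabla^2\phi(x^\phi(\mu))\bigr)^{-1} a_i}.
\]
Then, since the unit Dikin ellipsoid $\mathcal{E}(\nabla^2\phi(x^\phi(\mu)),x^\phi(\mu))$ is contained in $\mathcal{F}$ by \cite[Theorem~5.1.5]{nesterov2018lectures}, applying Lemma~\ref{lem:ellipsoid} once more (exactly as after equations~\eqref{eq:max}–\eqref{eq:min} in the proof of Theorem~\ref{thm:asymp_equivalence_center}) gives
\[
\sqrt{a_i^\top\bigl(\nabla^2\phi(x^\phi(\mu))\bigr)^{-1} a_i} \;\le\; (b-Ax^\phi(\mu))_i.
\]
Combining the last two displays immediately delivers $(1-\beta_\theta)(b-Ax^\phi(\mu))_i\le (b-Ax)_i\le(1+\beta_\theta)(b-Ax^\phi(\mu))_i$, as claimed.

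The main obstacle is the very first step: passing from the dual-norm bound $\lambda_{F_\mu}(x)\le\theta$ to a local-norm bound centered at $x^\phi(\mu)$ (rather than at $x$). Off the shelf, Newton analysis most naturally controls $\|x-x^\phi(\mu)\|_x$, and upgrading this to $\|x-x^\phi(\mu)\|_{x^\phi(\mu)}$ requires invoking the two-sided Hessian inequality \cite[Theorem~5.1.8]{nesterov2018lectures} and solving a resulting quadratic inequality in the displacement. This is where the precise shape of $\beta_\theta$ and the threshold $(\sqrt{69}-3)/10$ are determined; once this bookkeeping is completed, the remainder of the proof is a routine repetition of arguments already developed above.
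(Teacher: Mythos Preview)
Your proposal is correct and follows essentially the same route as the paper: obtain $\|x-x^\phi(\mu)\|_{x^\phi(\mu)}\le\beta_\theta$ from the Newton-decrement bound $\lambda_{F_\mu}(x)\le\theta$ via standard self-concordance estimates (the paper cites \cite[Lemma~5.1.5 and Theorem~5.2.1]{nesterov2018lectures}, which yield the quadratic inequality $\tfrac{r^2}{1+\tfrac{2}{3}r}\le\tfrac{\theta^2}{1-\theta}$ directly in the centered norm, so your ``obstacle'' is handled in one stroke), then run the Dikin-ellipsoid argument with Lemma~\ref{lem:ellipsoid} exactly as you describe.
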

\begin{proof}
Let $x\in\mathcal{N}^\phi_\theta (\mu)$.
By \cite[Lemma~5.1.5 and Theorem~5.2.1]{nesterov2018lectures} and the definition of $\ell_2$-neighbourhood,
\begin{align*}
\frac{\| x - x^\phi (\mu) \|_{x^\phi(\mu) }^2}{1+ \frac{2}{3}\| x - x^\phi (\mu) \|_{x^\phi(\mu) }} \le \frac{\theta^2}{1 -\theta}.
\end{align*}
Solving the quadratic inequality yields $\| x - x^\phi (\mu) \|_{x^\phi(\mu) } \le \beta_\theta$, where
\begin{equation}\label{eq:beta}
\beta_\theta = \frac{1}{3}\left(\frac{\theta^2}{1 -\theta} + \sqrt{\frac{\theta^4}{(1 -\theta)^2} + \frac{9\theta^2}{1 -\theta}}\right).
\end{equation}
We note that $\beta_\theta < 1$ whenever $\theta\in (0, (\sqrt{69}-3)/10)$.
Using Lemma~\ref{lem:ellipsoid}, we get
\begin{equation*}
\max_{x'\in \mathcal{E}(\beta_\theta^{-2} \nabla^2 \phi (x^\phi(\mu)), x^\phi (\mu))} (b - Ax')_i = (b - Ax^\phi (\mu))_i + \beta_\theta \sqrt{a_i^\top  \left(\nabla^2 \phi(x^\phi(\mu)) \right)^{-1} a_i},
\end{equation*}
and
\begin{equation*}
\min_{x'\in \mathcal{E}(\beta_\theta^{-2} \nabla^2 \phi (x^\phi(\mu)), x^\phi (\mu))} (b - Ax')_i = (b - Ax^\phi (\mu))_i - \beta_\theta \sqrt{a_i^\top  \left(\nabla^2 \phi(x^\phi(\mu)) \right)^{-1} a_i} \ge 0,
\end{equation*}
where the  inequality follows from that $Ax' \le b$ for any $x' \in \mathcal{E}(\beta_\theta^{-2} \nabla^2 \phi (x^\phi(\mu)), x^\phi (\mu)) \subseteq \mathcal{F}$.
Therefore, we have 
\begin{equation*}
\left| (b - Ax)_i - (b - Ax^\phi (\mu))_i \right| \le \beta_\theta \sqrt{a_i^\top  \left(\nabla^2 \phi(x^\phi(\mu)) \right)^{-1} a_i} \le \beta_\theta   (b - Ax^\phi (\mu))_i.
\end{equation*}
This completes the proof.
\end{proof}

The next proposition compares the optimality gap of the $\mu$-analytic center with that of any point in its $\ell_2$-neighbourhood.
\begin{prop}
\label{prop:gap_ratio}
Consider the linear program~\eqref{opt:LP_P}. Let $\theta\in (0, (\sqrt{69}-3)/10)$, $\mu >0$, $\phi$ be a self-concordant function on $\mathcal{F}$ and $x\in\mathcal{N}^\phi_\theta (\mu)$.  
Then,
\begin{equation*}
(1-\beta_\theta) \gap(x^\phi(\mu)) \le \gap(x) \le (1+\beta_\theta) \gap(x^\phi(\mu)).
\end{equation*}
\end{prop}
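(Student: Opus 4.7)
The plan is to mirror the proof of Proposition~\ref{prop:distance_to_center}, replacing the role of each constraint row $a_i$ by the cost vector $c$. The key observation is that the proof of Proposition~\ref{prop:distance_to_center} already establishes, via \cite[Lemma~5.1.5 and Theorem~5.2.1]{nesterov2018lectures} together with the definition of the $\ell_2$-neighbourhood, that every $x \in \mathcal{N}^\phi_\theta(\mu)$ satisfies $\|x - x^\phi(\mu)\|_{x^\phi(\mu)} \le \beta_\theta$. Equivalently,
\[ x \in \mathcal{E}\bigl(\beta_\theta^{-2}\nabla^2 \phi(x^\phi(\mu)),\, x^\phi(\mu)\bigr). \]
This is the only ingredient I would carry over verbatim, and it means I do not need to reprove the quadratic-inequality step.

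Next, I would apply Lemma~\ref{lem:ellipsoid} with $a = c$ to this ellipsoid to obtain
\[ \bigl| c^\top x - c^\top x^\phi(\mu) \bigr| \le \beta_\theta \sqrt{c^\top \bigl(\nabla^2 \phi(x^\phi(\mu))\bigr)^{-1} c}. \]
Since $\gap(x) - \gap(x^\phi(\mu)) = c^\top(x - x^\phi(\mu))$, this already gives the desired bound up to the factor $\sqrt{c^\top (\nabla^2 \phi(x^\phi(\mu)))^{-1} c}$, which I still need to control in terms of $\gap(x^\phi(\mu))$.

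For that last step I would invoke the Dikin ellipsoid inclusion $\mathcal{E}(\nabla^2 \phi(x^\phi(\mu)),\, x^\phi(\mu)) \subseteq \mathcal{F}$ from \cite[Theorem~5.1.5]{nesterov2018lectures}, exactly as in the proofs of Theorem~\ref{thm:asymp_equivalence_center} and Proposition~\ref{prop:distance_to_center}. Applying Lemma~\ref{lem:ellipsoid} once more, this time to $\gap(\cdot)$ minimized over the Dikin ellipsoid, gives
\[ 0 \le \min_{x' \in \mathcal{E}(\nabla^2 \phi(x^\phi(\mu)),\, x^\phi(\mu))} \gap(x') = \gap(x^\phi(\mu)) - \sqrt{c^\top \bigl(\nabla^2 \phi(x^\phi(\mu))\bigr)^{-1} c}, \]
so that $\sqrt{c^\top (\nabla^2 \phi(x^\phi(\mu)))^{-1} c} \le \gap(x^\phi(\mu))$.

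Combining the two bounds yields $|\gap(x) - \gap(x^\phi(\mu))| \le \beta_\theta \gap(x^\phi(\mu))$, which is the claim. I do not anticipate any real obstacle: every ingredient (the Dikin ellipsoid inclusion, the neighbourhood-to-local-distance conversion, and the linear-optimization-over-ellipsoids formula) has already been used in the earlier proofs in the section, and the only modification is swapping $a_i$ for $c$. The mildly delicate point, if any, is simply to notice that the very same inequality that gives $0 \le (b - Ax')_i$ on the Dikin ellipsoid (non-negativity of slacks on $\mathcal{F}$) has an analogue for $\gap$ (non-negativity of the optimality gap on $\mathcal{F}$), so the same argument template applies without change.
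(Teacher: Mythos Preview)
Your proposal is correct and matches the paper's intended argument: the paper explicitly states that the proof of Proposition~\ref{prop:gap_ratio} ``uses the same arguments as in the proof of Proposition~\ref{prop:distance_to_center},'' and your write-up does exactly that, swapping the constraint row $a_i$ for the cost vector $c$ and replacing non-negativity of the slack with non-negativity of $\gap$ on $\mathcal{F}$.
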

\noindent
The proof of Proposition~\ref{prop:gap_ratio} uses the same arguments as in the proof of Proposition~\ref{prop:distance_to_center} and is therefore omitted.

We will need to compare the $\mu$-analytic center associated with $\phi$ and the $\eta$-analytic center associated with the logarithmic barrier $\phi_{\ln}$ along the constant-cost slices of $\mathcal{F}_t$.

\begin{lem}\label{lem:eta_mu}
Consider the linear program~\ref{opt:LW}. For any self-concordant barrier $\phi$ on $\mathcal{F}_t$ and $\mu > 0$ satisfying 
\[ \gap(x^\phi(\mu)) < \frac{t}{4(3r+1) + 8\sqrt{3r+1}}, \]
there exists a unique $\eta(\mu) >0$ such that
\[  \frac{\eta(\mu)}{2}\le 
\gap(x^\phi(\mu)) = \gap(x^{\phi_{\ln}} (\eta(\mu)) ) \le (3r+1) \eta(\mu )  . \]
\end{lem}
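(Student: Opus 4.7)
The plan is to interpret $\eta(\mu)$ as the parameter at which the central-path gap of the logarithmic barrier on $\mathcal{F}_t$ matches $\gap(x^\phi(\mu))$, and to extract the two-sided bound by reading Theorem~\ref{thm:gap} at that parameter. Concretely: (i) verify that $\phi_{\ln}$ is $(3r+1)$-self-concordant on $\mathcal{F}_t$ and that the Euclidean inradius $\rho$ of $\mathcal{F}_t$ satisfies $\rho\|c\|_2 \ge t/2$; (ii) show that $\eta \mapsto \gap(x^{\phi_{\ln}}(\eta))$ is continuous and strictly increasing, with $\gap \to 0$ as $\eta\to 0^+$ and with supremum at least $\tfrac{t}{4(3r+1)+8\sqrt{3r+1}}$; (iii) invoke the intermediate-value theorem for existence and uniqueness of $\eta(\mu)$; (iv) apply Theorem~\ref{thm:gap} at $\eta(\mu)$ to obtain both halves of the desired inequality.

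For (i), counting the constraints of \ref{opt:LW} gives $1+1+3(r-1)+2 = 3r+1$ linear inequalities, so $\phi_{\ln}$ is $(3r+1)$-self-concordant on $\mathcal{F}_t$. Since $c = e_1$ has $\|c\|_2 = 1$, the bound $\rho\|c\|_2 \ge t/2$ reduces to showing that $\mathcal{F}_t$ contains a Euclidean ball of radius $t/2$, which I would verify by taking a ball centered at $\tfrac{t}{2}\mathbf{1}$ (possibly with a small perturbation). This is the most delicate computational piece: the hyperplanes $\{x_{2j+1}=tx_{2j-1}\}$ lie at Euclidean distance $t(t-1)/(2\sqrt{1+t^2})$ from $\tfrac{t}{2}\mathbf{1}$, which only approaches $t/2$ asymptotically as $t\to\infty$, so one either takes $t$ large enough to absorb the $O(1)$ gap or perturbs the center along the odd coordinates. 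For (ii), differentiating the optimality condition $c + \eta\,\nabla\phi_{\ln}(x^{\phi_{\ln}}(\eta)) = 0$ with respect to $\eta$ gives
\[ \frac{d}{d\eta}\bigl(c^\top x^{\phi_{\ln}}(\eta)\bigr) \;=\; \frac{1}{\eta^2}\,c^\top\bigl(\nabla^2\phi_{\ln}(x^{\phi_{\ln}}(\eta))\bigr)^{-1} c \;>\; 0, \]
so $\gap(x^{\phi_{\ln}}(\cdot))$ is continuous and strictly increasing; it tends to $0$ as $\eta\to 0^+$ by \cite[Theorem~5.3.10]{nesterov2018lectures}, while for $\eta$ large enough the lower bound in Theorem~\ref{thm:gap} yields $\gap(x^{\phi_{\ln}}(\eta)) \ge \tfrac{\rho\|c\|_2}{2(3r+1)+4\sqrt{3r+1}} \ge \tfrac{t}{4(3r+1)+8\sqrt{3r+1}}$.

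By hypothesis $\gap(x^\phi(\mu))$ lies strictly inside the image of $\gap(x^{\phi_{\ln}}(\cdot))$, so (iii) the intermediate-value theorem together with strict monotonicity produces a unique $\eta(\mu)>0$ with $\gap(x^{\phi_{\ln}}(\eta(\mu))) = \gap(x^\phi(\mu))$. For (iv), invoking Theorem~\ref{thm:gap} at $\eta = \eta(\mu)$ gives
\[ \min\!\left\{\tfrac{\eta(\mu)}{2},\; \tfrac{\rho\|c\|_2}{2(3r+1)+4\sqrt{3r+1}}\right\} \;\le\; \gap(x^\phi(\mu)) \;\le\; (3r+1)\,\eta(\mu), \]
whose right inequality is exactly the claimed upper bound. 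For the lower, the second term of the minimum is at least $\tfrac{t}{4(3r+1)+8\sqrt{3r+1}}$ and by hypothesis strictly exceeds $\gap(x^\phi(\mu))$, so the minimum must equal $\eta(\mu)/2$, yielding $\eta(\mu)/2 \le \gap(x^\phi(\mu))$. The main source of friction is step (i)---securing the inradius estimate with sharp-enough constants; the remaining steps are routine consequences of Theorem~\ref{thm:gap} and classical properties of the central path.
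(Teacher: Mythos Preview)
Your proposal is correct and follows essentially the same route as the paper: count the $3r+1$ constraints to get the self-concordance parameter of $\phi_{\ln}$, use $\|c\|_2=1$, bound the inradius of $\mathcal{F}_t$ below by $t/2$, invoke continuity and strict monotonicity of $\eta\mapsto\gap(x^{\phi_{\ln}}(\eta))$ for existence and uniqueness, and then read off both inequalities from Theorem~\ref{thm:gap} at $\eta(\mu)$, using the hypothesis on $\gap(x^\phi(\mu))$ to force the minimum in that theorem to equal $\eta(\mu)/2$. The only substantive difference is in the inradius step: rather than centering a ball at $\tfrac{t}{2}\mathbf{1}$ and wrestling with the oblique hyperplanes, the paper simply asserts that $\mathcal{F}_t$ contains an axis-aligned hypercube of side $t$ (hence an inscribed ball of radius $t/2$), which sidesteps your distance computation entirely; you may find that reformulation cleaner than perturbing the center.
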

\begin{proof}
We first prove the existence of $\eta(\mu)$ satisfying $\gap(x^\phi(\mu)) = \gap(x^{\phi_{\ln}} (\eta(\mu)))$. Since $\mu\mapsto\gap(x^\phi(\mu))$ and $\eta\mapsto\gap(x^{\phi_{\ln}} (\eta(\mu)))$ are both continuous and strictly increasing functions attaining the minimum value $0$ at $0$, it suffices to show that for any $\mu > 0$ with 
\[ \gap(x^\phi(\mu)) < \frac{t}{4(3r+1) + 8\sqrt{3r+1}}, \]
there exists $\eta > 0$ such that $ \gap(x^\phi(\mu)) \le \gap(x^{\phi_{\ln}} (\eta) )$. To show this, we note that the feasible region $\mathcal{F}_t$ contains a hypercube of side length $t$, which in turn contains a ball of radius $\frac{t}{2}$. Also, the cost vector of problem~\ref{opt:LW} is $c_t = (1,0,\dots,0)^\top$ and hence $\|c_t\|_2 = 1$. Furthermore, it is well-known that the self-concordance parameter of the logarithmic barrier $\phi_{\ln}$ on a polytope is precisely the number of linear inequalities defining the polytope, see~\cite[Section~5.3]{nesterov2018lectures} for example. Therefore, $\phi_{\ln}$ is $(3r+1)$-self-concordant on $\mathcal{F}_t$.
Theorem~\ref{thm:gap} then implies that for a large enough $\eta>0$,
\begin{align*}
\gap(x^\phi(\mu)) <  \min\left\lbrace\frac{t}{4(3r+1) + 8\sqrt{3r+1}}, \frac{\eta}{2} \right\rbrace \le \gap(x^{\phi_{\ln}} (\eta)).
\end{align*}
Therefore, there exists a unique $\eta(\mu) > 0$ with $\gap(x^\phi(\mu)) = \gap(x^{\phi_{\ln}} (\eta(\mu)))$.

It remains to prove the inequalities for $\gap(x^\phi(\mu)) = \gap(x^{\phi_{\ln}} (\eta(\mu)))$. By Theorem~\ref{thm:gap},
\begin{equation*}
\min\left\lbrace \frac{t}{4(3r+1) + 8\sqrt{3r+1}}, \frac{\eta(\mu)}{2} \right\rbrace \le 
\gap(x^{\phi_{\ln}} (\eta(\mu))) \le \eta(\mu) (3r+1) .
\end{equation*}
Since 
\[\gap(x^\phi(\mu)) = \gap(x^{\phi_{\ln}} (\eta(\mu))) < \frac{t}{4(3r+1) + 8\sqrt{3r+1}}, \]
we get 
\begin{equation*}
 \frac{\eta(\mu)}{2}  \le 
\gap(x^\phi(\mu)) = \gap(x^{\phi_{\ln}} (\eta(\mu))) \le \eta(\mu) (3r+1) .
\end{equation*}
This completes the proof.
\end{proof}

We will also need the following remarkable result from~\cite[Theorem~29]{allamigeon2018log}.
\begin{thm}\label{thm:wide_neighbourhood}
Let $\omega\in (0,1)$. Consider the linear program \ref{opt:LW} for 
\begin{equation*}
t > \left( \frac{2 (5r-1) (10 r - 1)^4 ((10r-2)!)^8}{1-\omega} \right)^{2^{r+2}} .
\end{equation*}
Suppose that
\begin{equation*}
[x^0, x^1] \cup [x^1,x^2]\cup\cdots \cup[x^{K-1},x^K] \subseteq \mathcal{W}_{\omega, t}, 
\end{equation*} 
with $x^0\in \mathcal{W}_{\omega,t} (\eta_0)$ and $x^K \in \mathcal{W}_{\omega,t} (\eta_K)$ for some $\eta_0 \ge \sqrt{t}$ and $\eta_K \le 1$. Then, $K \ge 2^{r-3}$.
\end{thm}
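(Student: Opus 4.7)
The plan is to apply Theorem~\ref{thm:wide_neighbourhood} as a black box to a suitable tail of the polygonal curve $\bigcup_{k \ge 0}[x^k, x^{k+1}]$ generated by the short-step method. I need three ingredients: (i) the tail lies in some wide neighbourhood $\mathcal{W}_{\omega, t}$ with $\omega \in (0, 1)$ depending only on $\theta, \nu, r$; (ii) its initial point sits in $\mathcal{W}_{\omega, t}(\eta_0)$ with $\eta_0 \ge \sqrt{t}$; and (iii) $x^K$ sits in $\mathcal{W}_{\omega, t}(\eta_K)$ with $\eta_K \le 1$. Once (i)--(iii) are in place, Theorem~\ref{thm:wide_neighbourhood} yields $K \ge 2^{r-3}$; its threshold on $t$ depends only on $\omega$, hence only on $r$ and $\nu$, which is the precise content of ``sufficiently large $t$''.

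The core containment $\mathcal{N}^\phi_\theta \subseteq \mathcal{W}_{\omega, t}$ (modulo the moderate-gap condition required by Lemma~\ref{lem:eta_mu}) is obtained as follows. Fix $x \in \mathcal{N}^\phi_\theta(\mu)$ with $\gap(x^\phi(\mu))$ below the threshold of Lemma~\ref{lem:eta_mu}, and let $\eta = \eta(\mu)$ be the matching logarithmic-barrier parameter it produces. Chaining Proposition~\ref{prop:distance_to_center} (slack ratios of $x$ against $x^\phi(\mu)$) with Theorem~\ref{thm:asymp_equivalence_center} applied to the pair $(\phi, \phi_{\ln})$ on the constant-cost slice through $x^\phi(\mu)$ (slack ratios of $x^\phi(\mu)$ against $x^{\phi_{\ln}}(\eta)$) yields a uniform sandwich
\[ L \le \frac{(b_t - A_t x)_i}{(b_t - A_t x^{\phi_{\ln}}(\eta))_i} \le U, \qquad i = 1, \ldots, 3r+1, \]
with $L = (1-\beta_\theta)/(1+C_\nu)$ and $U = (1+\beta_\theta)(1+C_{3r+1})$. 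The canonical log-barrier dual certificate $y_i := \eta/(b_t - A_t x^{\phi_{\ln}}(\eta))_i$ satisfies $A_t^\top y = -c_t$ from the optimality condition of~\eqref{opt:center}, and averaging it against $b_t - A_t x$ produces a value $m\mu'$ with $\mu' \in [\eta L, \eta U]$. The entrywise estimate $y_i(b_t - A_t x)_i \ge \eta L \ge (L/U)\,\mu'$ then certifies $x \in \mathcal{W}_{\omega, t}(\mu')$ with $\omega := 1 - L/U \in (0, 1)$.

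The same chain translates the gap hypotheses into endpoint parameters. Running it downwards, $\mu'^K \le \eta(\mu^K)\, U \le 2U \gap(x^\phi(\mu^K)) \le 2U \gap(x^K)/(1-\beta_\theta)$ by the above, Lemma~\ref{lem:eta_mu}, and Proposition~\ref{prop:gap_ratio}; since $U$ is $O(r)$, the choice $\gap(x^K) \le 1/(180r)$ forces $\mu'^K \le 1$. Running it upwards, $\mu'^0 \ge \eta(\mu^0)\, L \ge L \gap(x^\phi(\mu^0))/(3r+1) \ge L \gap(x^0)/[(1+\beta_\theta)(3r+1)]$; using $1+C_\nu \le 4\nu$ for $\nu \ge 1$, the hypothesis $\gap(x^0) \ge 320 r \nu \sqrt{t}$ forces $\mu'^0 \ge \sqrt{t}$.

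The main obstacle is that Lemma~\ref{lem:eta_mu} requires $\gap(x^\phi(\mu))$ below the threshold $t/[4(3r+1)+8\sqrt{3r+1}]$, which may fail at $x^0$ since the hypothesis only \emph{lower}-bounds $\gap(x^0)$. I plan to handle this via continuity (in fact piecewise linearity) of $\gap$ along the polygonal curve: since $\gap(x^K) \le 1/(180r)$ lies far below that threshold for large $t$, an intermediate-value argument locates a point $\tilde x^0$ on the curve whose gap is below Lemma~\ref{lem:eta_mu}'s threshold yet still at least $320 r \nu \sqrt{t}$, and Theorem~\ref{thm:wide_neighbourhood} applied to the sub-curve starting at $\tilde x^0$ still delivers $K \ge 2^{r-3}$. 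Beyond this, the proof is a routine constant-tracking exercise in $\beta_\theta$, $C_\nu$, and $C_{3r+1}$.
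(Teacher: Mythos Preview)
Your proposal is not a proof of Theorem~\ref{thm:wide_neighbourhood}; it is a proof of Corollary~\ref{coro:main}. You open with ``The plan is to apply Theorem~\ref{thm:wide_neighbourhood} as a black box\ldots'', which is circular if the goal is to establish Theorem~\ref{thm:wide_neighbourhood} itself. In the paper, Theorem~\ref{thm:wide_neighbourhood} is \emph{not} proved: it is quoted from \cite[Theorem~29]{allamigeon2018log}, and the surrounding remarks only explain why the version stated here (primal variables only, and the weaker endpoint condition $\eta_0 \ge \sqrt{t}$ with the weaker conclusion $K \ge 2^{r-3}$) still follows from the tropical-geometry arguments in \cite[Sections~4.3 and~6.2]{allamigeon2018log}. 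A genuine proof of Theorem~\ref{thm:wide_neighbourhood} would require those tropical techniques (tropicalizing the central path of~\ref{opt:LW}, counting pieces of the resulting tropical curve, and transferring the count back to the wide neighbourhood), none of which appear in your write-up.

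If instead your intention was to prove Corollary~\ref{coro:main} (which the content of your argument strongly suggests), then your approach is essentially identical to the paper's proof of Corollary~\ref{coro:main_precise}: the same intermediate-value reduction to a sub-curve with controlled gap, the same chaining of Proposition~\ref{prop:distance_to_center} and Theorem~\ref{thm:asymp_equivalence_center} to get slack ratios $L$ and $U$, the same log-barrier dual certificate $y_i = \eta/(b_t - A_t x^{\phi_{\ln}}(\eta))_i$, and the same translation of the gap hypotheses into endpoint parameters via Lemma~\ref{lem:eta_mu} and Proposition~\ref{prop:gap_ratio}. In that reading your sketch is correct and matches the paper.
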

Some remarks are in order. First, the original statement of~\cite[Theorem~29]{allamigeon2018log} concerns iterates of a longer vector consisting of primal, dual and slack variables. But Theorem~\ref{thm:wide_neighbourhood} concerns only the primal variables and hence is seemingly stronger than~\cite[Theorem~29]{allamigeon2018log}, because, for a polygonal curve in $\R^N$ with $K$ pieces, its projection to a lower dimensional space could possibly have less than $K$ pieces in general. However, upon a close inspection of the proof of \cite[Theorem~29]{allamigeon2018log} and \cite[Section 6.2]{allamigeon2018log}, we find that Theorem~\ref{thm:wide_neighbourhood} holds for the linear program~\ref{opt:LW}.
Using the language of \cite{allamigeon2018log}, the reason is that for the tropical central path of the linear program~\ref{opt:LW}, even the projection onto the primal variables is already a polygonal curve with a huge number of pieces.
Second, in the original statement of~\cite[Theorem~29]{allamigeon2018log}, the requirement on $\eta_0$ is that $\eta_0 \ge t^2$, and the conclusion is that $K \ge 2^{r-1}$. By considering a shorter part of the tropical central path of \ref{opt:LW}, we find that replacing the requirement on $\eta_0$ with the weaker bound of $\eta_0 \ge \sqrt{t}$ leads the weaker conclusion of $K\ge 2^{r-3}$, see~\cite[Section~4.3]{allamigeon2018log}.


\subsection{Proof of Corollary~\ref{coro:main}}
We now have enough tools at our disposal to prove Corollary~\ref{coro:main}. In fact, we will prove the following version with slightly better constants, from which Corollary~\ref{coro:main} follows. 
\begin{coro}[Precise Version of Corollary~\ref{coro:main}]
\label{coro:main_precise}
Consider \ref{opt:LW} for a sufficiently large $t>1$.
Let $\theta\in (0, (\sqrt{69}-3)/10)$, $\phi$ be a $\nu$-self-concordant barrier on $\mathcal{F}_t$ with $\nu$ independent of $t$ and $\{x^k\}_{k\ge 0}$ be a sequence of iterates generated by a short-step method associated with $\phi$. 
Suppose that 
\[ \gap(x^0) \ge \frac{(1+\beta_\theta)(3r+1)(1+C_\nu)\sqrt{t}}{1-\beta_\theta}  \quad\text{and}\quad  \gap(x^K) \le  \frac{1-\beta_\theta}{2 (1+\beta_\theta) (1+C_{3r+1})}  ,\]
where $\beta_\theta \in (0,1)$ is a constant depending only on $\theta$ defined in~\eqref{eq:beta}.
Then, $K \ge 2^{r-3}$.
\end{coro}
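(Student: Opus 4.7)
The plan is to embed the polygonal curve generated by the short-step method into a wide neighbourhood $\mathcal{W}_{\omega,t}$ of the logarithmic barrier $\phi_{\ln}$ and then invoke Theorem~\ref{thm:wide_neighbourhood}. By continuity of $\gap$ along the curve, the intermediate value theorem lets us truncate and assume without loss of generality that $\gap(x^0) = \frac{(1+\beta_\theta)(3r+1)(1+C_\nu)\sqrt{t}}{1-\beta_\theta} =: Y$ and $\gap(x^K) = \frac{1-\beta_\theta}{2(1+\beta_\theta)(1+C_{3r+1})} =: X$, choosing the truncation points so that $\gap$ stays within $[X,Y]$ along the truncated sub-curve. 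The truncated curve has at most $K$ pieces, so it suffices to lower bound this number; moreover, for $t$ large, $Y = O(\sqrt{t})$ comfortably satisfies the $\Omega(t)$ gap threshold required by Lemma~\ref{lem:eta_mu}.

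Pick any point $y$ on the truncated curve and let $\mu > 0$ with $y \in \mathcal{N}^\phi_\theta(\mu)$. Proposition~\ref{prop:distance_to_center} yields $(b - Ay)_i/(b - Ax^\phi(\mu))_i \in [1-\beta_\theta,\, 1+\beta_\theta]$. Using Lemma~\ref{lem:eta_mu}, choose $\eta = \eta(\mu) > 0$ with $\gap(x^\phi(\mu)) = \gap(x^{\phi_{\ln}}(\eta))$; since $\phi_{\ln}$ is $(3r+1)$-self-concordant on $\mathcal{F}_t$ and $x^\phi(\mu)$ and $x^{\phi_{\ln}}(\eta)$ lie on the same constant-cost slice, Theorem~\ref{thm:asymp_equivalence_center} gives $(b - Ax^\phi(\mu))_i/(b - Ax^{\phi_{\ln}}(\eta))_i \in [(1+C_\nu)^{-1},\, 1+C_{3r+1}]$. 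Chaining produces
\[\frac{1-\beta_\theta}{1+C_\nu} \le \frac{(b - Ay)_i}{(b - Ax^{\phi_{\ln}}(\eta))_i} \le (1+\beta_\theta)(1+C_{3r+1}), \qquad i = 1, \dots, 3r+1.\]

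To certify $y \in \mathcal{W}_{\omega,t}$, set $\lambda_i = \eta/(b - Ax^{\phi_{\ln}}(\eta))_i$; optimality of $x^{\phi_{\ln}}(\eta)$ gives $\lambda \in \R_+^{3r+1}$ and $A^\top \lambda = -c_t$. Define $\tilde{\eta} = \lambda^\top(b - Ay)/(3r+1)$, so $\lambda^\top(b-Ay) = (3r+1)\tilde{\eta}$. The products $\lambda_i(b - Ay)_i = \eta\cdot(b - Ay)_i/(b - Ax^{\phi_{\ln}}(\eta))_i$ all sit in an interval whose ratio is $\frac{(1+\beta_\theta)(1+C_\nu)(1+C_{3r+1})}{1-\beta_\theta}$, so $\lambda_i(b - Ay)_i \ge (1-\omega)\tilde{\eta}$ for
\[\omega = 1 - \frac{1-\beta_\theta}{(1+\beta_\theta)(1+C_\nu)(1+C_{3r+1})} \in (0,1),\]
placing $y \in \mathcal{W}_{\omega,t}(\tilde{\eta})$ and showing the whole truncated curve lies in $\mathcal{W}_{\omega,t}$. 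Specializing to the endpoints and combining Proposition~\ref{prop:gap_ratio} with the upper/lower bounds in Lemma~\ref{lem:eta_mu} and the slack comparisons, a direct algebraic check shows that $\gap = Y$ at the starting endpoint translates to $\tilde{\eta}_0 \ge \sqrt{t}$, while $\gap = X$ at the final endpoint translates to $\tilde{\eta}_K \le 1$; the constants in Corollary~\ref{coro:main_precise} are tuned precisely for this. Taking $t$ large enough for the $\omega$-dependent lower bound in Theorem~\ref{thm:wide_neighbourhood}, that theorem delivers $K \ge 2^{r-3}$.

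The main obstacle is the dual witness construction: because $\phi$ is an arbitrary $\nu$-self-concordant barrier that a priori has no direct relation to $\phi_{\ln}$, the crossing between the two neighbourhoods must be routed through the constant-cost-slice comparison of central paths (Theorem~\ref{thm:asymp_equivalence_center}), and then the resulting two-sided slack ratio has to be tight enough to give an $\omega \in (0,1)$ independent of $t$. Once this bridge is built, the remaining issue—the gap-threshold precondition of Lemma~\ref{lem:eta_mu}—is easily dispatched by the $\sqrt{t}$-versus-$t$ scale separation inherent in the hypothesis.
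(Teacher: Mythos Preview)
Your proposal is correct and follows essentially the same route as the paper: truncate the curve to the gap interval $[X,Y]$, use Proposition~\ref{prop:distance_to_center} and Theorem~\ref{thm:asymp_equivalence_center} (via Lemma~\ref{lem:eta_mu}) to chain slack ratios from $y$ through $x^\phi(\mu)$ to $x^{\phi_{\ln}}(\eta)$, take the log-barrier KKT dual $\lambda_i = \eta/(b-Ax^{\phi_{\ln}}(\eta))_i$ as the wide-neighbourhood certificate with the same $\omega$, and then verify $\tilde\eta_0\ge\sqrt{t}$ and $\tilde\eta_K\le 1$ via Proposition~\ref{prop:gap_ratio} and Lemma~\ref{lem:eta_mu} before invoking Theorem~\ref{thm:wide_neighbourhood}. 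The dual-witness construction, the value of $\omega$, and the endpoint algebra all match the paper's proof.
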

\begin{proof}
Let
\[ \omega = 1- \frac{(1-\beta_\theta)}{(1+\beta_\theta) (1 + C_\nu )(1+C_{3r+1})}   \]
and 
\[ t >  \left( \frac{2 (5r-1) (10 r - 1)^4 ((10r-2)!)^8}{1-\omega} \right)^{2^{r+2}}. \]
Note that $\omega \in (0,1)$.

\subsubsection*{Reduction}

Without loss of generality, we can assume that for any $x\in [x^0,x^1] \cup \cdots\cup [x^{K-1} , x^K]$,
\begin{equation}
\label{ineq:gap_assume}
\gap(x^0) = \frac{(1+\beta_\theta)(3r+1)(1+C_\nu)\sqrt{t}}{1-\beta_\theta} \ge \gap(x) = \frac{1-\beta_\theta}{2 (1+\beta_\theta) (1+C_{3r+1})} = \gap(x^K).
\end{equation}
Indeed, if this is not the case, because of the continuity of $\gap$ and the assumption that 
\[\gap(x^0) \ge \frac{(1+\beta_\theta)(3r+1)(1+C_\nu)\sqrt{t}}{1-\beta_\theta} \quad \text{and}\quad \gap(x^K)\le \frac{1-\beta_\theta}{2 (1+\beta_\theta) (1+C_{3r+1})} ,\] we can choose a sub-curve (connected subset) of $[x^0,x^1] \cup \cdots\cup [x^{K-1} , x^K]$ that satisfies all the assumptions of Corollary~\ref{coro:main_precise} as well as assumption~\eqref{ineq:gap_assume}.

Our goal is to show that
$[x^0,x^1]\cup\cdots \cup [x^{K-1}, x^K] \subseteq \mathcal{W}_{\omega, t}  $
and that $x^0\in\mathcal{W}_{\omega, t} (\eta_0)$ and $x^K\in\mathcal{W}_{\omega, t} (\eta_K)$, for some $\eta_0 \ge \sqrt{t}$ and $\eta_K \le 1$.
If this can be proved, the desired conclusion would then follow from Theorem~\ref{thm:wide_neighbourhood}.

\subsubsection*{Proving $[x^0,x^1]\cup\cdots \cup [x^{K-1}, x^K] \subseteq \mathcal{W}_{\omega, t}  $}
Let $x\in [x^0,x^1]\cup\cdots \cup [x^{K-1}, x^K]$. Then, $x\in \mathcal{N}^\phi_{\theta, t} (\mu)$ for some $\mu >0$. By assumption~\eqref{ineq:gap_assume} and Proposition~\ref{prop:gap_ratio}, we have
\[\gap(x^\phi(\mu)) \le \frac{(1+\beta_\theta)^2 (3r+1)(1+C_\nu)\sqrt{t}}{1-\beta_\theta} < \frac{t}{4(3r+1) + 8\sqrt{3r+1}}.\] 
Using Lemma~\ref{lem:eta_mu}, there exists a unique $\eta({\mu}) > 0$ such that 
\[ c_t^\top x_t^\phi(\mu) = c_t^\top x_t^{\phi_{\ln}} (\eta({\mu} )) .\]
By Proposition~\ref{prop:distance_to_center} and Theorem~\ref{thm:asymp_equivalence_center}, for any $i=1,\dots,3r+1$,
\begin{equation}\label{eq:coro_proof_1}
\begin{split}
\frac{(b_t - A_t x)_i}{(b_t - A_t x^{\phi_{\ln}}_t (\eta(\mu)))_i} = \frac{(b_t - A_t x)_i}{(b_t - A_t x^{\phi}_t (\mu))_i}\cdot \frac{(b_t - A_t x^{\phi}_t (\mu))_i}{(b_t - A_t x^{\phi_{\ln}}_t (\eta(\mu)))_i} \ge \frac{(1-\beta_\theta)}{(1 + C_\nu )}   ,
\end{split}
\end{equation}
From the definition of the logarithmic barrier $\phi_{\ln}$ and the optimality conditions of problem~\eqref{opt:center}, there exists $y\in \R_+^{3r+1}$ such that $ A_t^\top y = - c_t$ and 
\[ y_i (b_t - A_tx^{\phi_{\ln}}_t(\eta(\mu)))_i = \eta(\mu), \quad i = 1,\dots, 3r+1.\]
Averaging these $3r+1$ equalities and then using Proposition~\ref{prop:distance_to_center}, Theorem~\ref{thm:asymp_equivalence_center} and the fact that $\phi_{\ln}$ is $(3r+1)$-self-concordant on $\mathcal{F}_t$, we get
\begin{equation}\label{eq:coro_proof_2}
\begin{split}
\eta (\mu) &= \frac{1}{3r+1} \sum_{i=1}^{3r+1} y_i (b_t - A_tx^{\phi_{\ln}}_t(\eta(\mu)))_i \\
& \ge \frac{(1 + C_{3r+1})^{-1}}{3r+1} \sum_{i=1}^{3r+1} y_i (b_t - A_tx^\phi_t(\mu))_i \\
& \ge (1 + C_{3r+1})^{-1} (1+\beta_\theta)^{-1} \frac{y^\top (b_t - A_t x )}{3r + 1}.   
\end{split}
\end{equation}
Hence, using~\eqref{eq:coro_proof_1} and~\eqref{eq:coro_proof_2}, for any $i=1,\dots,3r+1$,
\begin{align}
y_i (b_t - A_t x)_i & \ge \frac{(1-\beta_\theta)}{(1 + C_\nu )} \cdot y_i (b_t - A_tx^{\phi_{\ln}}_t(\eta(\mu))_i  = \frac{(1-\beta_\theta)}{(1 + C_\nu )}\cdot \eta (\mu)\label{eq:coro_proof_3} \\
&\ge \frac{(1-\beta_\theta)}{(1+\beta_\theta) (1 + C_\nu )(1+C_{3r+1})}\cdot \frac{y^\top (b_t - A_t x )}{3r + 1},\notag
\end{align}
which implies that $x\in \mathcal{W}_{\omega, t} (\eta_x)$ with 
\begin{equation}
\label{eq:coro_proof_4}
\eta_x = \frac{y^\top (b_t - A_t x )}{3r + 1} .
\end{equation}

\subsubsection*{Proving $x^0\in\mathcal{W}_{\omega, t} (\eta_0)$ and $x^K\in\mathcal{W}_{\omega, t} (\eta_K)$ for some $\eta_0 \ge \sqrt{t}$ and $\eta_K \le 1$}
Using definition~\eqref{eq:coro_proof_4}, inequality~\eqref{eq:coro_proof_2}, Lemma~\ref{lem:eta_mu} and Proposition~\ref{prop:gap_ratio}, we get
\begin{align*}
\eta_{x^K} &= \frac{y^\top (b_t - A_t x^K )}{3r + 1} \le (1+\beta_\theta) (1+C_{3r+1}) \eta(\mu_K) \le 2 (1+\beta_\theta) (1+C_{3r+1}) \gap (x^\phi(\mu_K))\\
&  \le \frac{2 (1+\beta_\theta) (1+C_{3r+1})}{1-\beta_\theta} \gap (x^K)\ \le 1,
\end{align*}
where $x^K \in \mathcal{N}^\phi_{\theta,t} (\mu_K)$. 
Similarly, using definition~\eqref{eq:coro_proof_4}, inequality~\eqref{eq:coro_proof_3}, Lemma~\ref{lem:eta_mu} and Proposition~\ref{prop:gap_ratio}, we get
\begin{align*}
\eta_{x^0} &= \frac{y^\top (b_t - A_t x^0 )}{3r + 1} \ge \frac{1-\beta_\theta}{1+C_\nu} \eta (\mu^0) \ge \frac{1-\beta_\theta}{(3r+1)(1+C_\nu)} \gap(x^\phi(\mu_0))\\
& \ge \frac{1-\beta_\theta}{(1+\beta_\theta)(3r+1)(1+C_\nu)} \gap(x^0) \ge \sqrt{t},
\end{align*}
where $x^0\in \mathcal{N}^\phi_{\theta, t} (\mu_0)$
Taking $\eta_0 = \eta_{x^0}$ and $\eta_K = \eta_{x^0}$, the proof is completed.
\end{proof}

\section*{Acknowledgments}
The authors thank Defeng Sun, Kim-Chuan Toh and Stephen Wright for their helpful discussions.

\bibliographystyle{abbrv}
\bibliography{references}
\end{document}